\newtheorem{theorem}{Theorem}[section]
\newtheorem{lemma}{Lemma}[section]
\numberwithin{equation}{section} \numberwithin{theorem}{section}
\def\bt{\begin{thm}}
\def\et{\end{thm}}
\def\bl{\begin{lem}}
\def\el{\end{lem}}
\def\bd{\begin{defi}}
\def\ed{\end{defi}}
\def\bc{\begin{cor}}
\def\ec{\end{cor}}
\def\bp{\begin{proof}}
\def\ep{\end{proof}}
\def\br{\begin{rem}}
\def\er{\end{rem}}
\newtheorem{thm}{Theorem}[section]
\newtheorem{lem}{Lemma}[section]
\newtheorem{defi}{Definition}[section]
\newtheorem{rem}{Remark}[section]
\newtheorem{cor}{Corollary}[section]
\newcommand{\be}{\begin{equation}}
\newcommand{\ee}{\end{equation}}
\renewcommand{\det}{\mathrm{det}}
\newcommand{\mA}{\mathcal A}
\newcommand{\bif}{\mathfrak{b}_{\lambda}}
\newcommand{\bifc}{\mathfrak{b}_{\lambda_0}}
\newcommand{\I}{\mathcal{I}}
\newcommand{\C}{\mathcal{C}}
\renewcommand{\arraystretch}{1.5}
\begin{document}
\title{Phase Transition Analysis of the Dynamic Instability of Microtubules}
\author{{\sc \small
Shantia Yarahmadian\thanks{Mississippi State University, MS 39762;
syarahmadian@math.msstate.edu} and
Masoud Yari\thanks{Texas A\&M University-Corpus Christi, TX 78412-5825; myari@tamucc.edu} 
}}

\maketitle

\begin{abstract}
This paper provides the phase transition analysis of a reaction diffusion equations system modeling dynamic instability of microtubules. For this purpose we have generalized the macroscopic model studied by Mour\~{a}o et all \cite{MSS}. This model investigates the interaction between the microtubule nucleation, essential dynamics parameters and extinction and their impact on the stability of the system. The considered framework encompasses a system of partial differential equations for the elongation and shortening of microtubules,  where the rates of elongation as well as the lifetimes of the elongating shortening phases are linear functions of GTP-tubulin concentration. In a novel way, this paper investigates the stability analysis and provides a bifurcation analysis for the dynamic instability of microtubules in the presence of diffusion and all of the fundamental dynamics parameters. Our stability analysis introduces the phase transition method  as a new mathematical tool in the study of microtubule dynamics. The mathematical tools introduced to handle the problem should be of general use. 
\end{abstract}

\tableofcontents

\noindent Keywords: Microtubule, Bifurcation, Phase Transition

\section{Introduction}
Microtubules (MTs) are natural rigid structural  polymers constructed from subunits made of the protein tubulin \cite{DVMW}. They exist inside of living eukaryotic cells and participate in many functions of the cell, including mitosis \cite{AJLR}, axon formation in neurons \cite{SO}, and signaling \cite{EM}. 
MTs are also play an important role in important diseases such as Alzheimer's disease \cite{BVT} and Parkinson's disease \cite{F}. MTs within cells are typically initiated from a complex of proteins that form a nucleation template. Polymerization proceeds at a rate that is dependent upon the concentration of free tubulin subunits \cite{MSS}. Over time, Mts stochastically switching between states of polymerization and depolymerization, a process termed {\it dynamic instability} \cite{SKE, MK, DM}. Switching frequencies are regulated during certain cellular transitions to modify the length distribution and density of the polymer array. The mechanisms governing dynamic instability are still an active subject of both experimental and theoretical research\cite{YBZS}.
\\
There are several analytical models describing how the major factors leading to dynamic instability (i.e. growth and shortening velocities together with catastrophe and rescue frequencies) will create a steady state system of polymers under various conditions \cite{YBZS}. In this work, we advance these analysis of MT dynamics by extending the analytical methods to go through bifurcation analysis associated with the fundamental dynamics parameter, nucleation, extinction and diffusion. The phase transition is investigated in greater detail with particular attention to bifurcation points and dynamic parameters.
\\
Mour\~{a}o et all \cite{MSS} studied the interaction between the microtubule nucleation and dynamics parameters, using macroscopic Monte Carlo simulations, to study the contribution of these parameters in the underlying microtubule array morphology (i.e. polymer density and length distribution).
They found that in addition to the well-characterized steady state achieved between free tubulin subunits and microtubule polymer, microtubule nucleation and extinction constitute a second, interdependent steady state \cite{MSS}. Their studies also shows that the magnitude of both nucleation and extinction rate additively impacts the final steady state free subunit concentration and consequently the nucleation template number plays a defining role in shaping the microtubule length distribution and polymer density \cite{MSS}.
\\
Tubulin diffusion is slow and this slowness causes a fast dephasing in the growth dynamics, unbounded growth of some MTs, and morphological change toward creating bounded short MTs in the nucleation center and unbounded long MTs with narrowly distributed lengths \cite{DYH}. The competition between the rate of the tubulin assembly and the tubulins diffusion rate characterized the transition from unbounded to bounded growth. The present study considers the impact of the tubulin diffusion coefficient along with other dynamic parameters on the dynamic instability.
\\
In this paper, we will study the formation and dynamic instability of MTs by modifying the macroscopic mathematical model of Mour\~{a}o et all. We will consider generalized concentration-dependent model for microtubule growth and shrinkage and includes the diffusion effect as well.

There are in fact three types of phase transition: continues (Type I), jump (Type II) and mix (Type III). The mathematical framework that we will follow here is established in recent work of Ma and Wang (see \cite{MW-B2}). Our study of the asymptotic behavior of the solutions around a bifurcation point is based on a new center manifold reduction procedure developed in \cite{MW-B1} and \cite{KWY}. The key element here is a more precise approximation of the reduced center manifold equations. This method will furnish us with a comprehensive set of reduced equations and certain transition numbers. The behavior of transition numbers will determine  the whole dynamic of the system near the critical bifurcation parameter.

The paper is organized as follows: the mathematical model is presented in Section \ref{sec.2}. In Section \ref{sec.3}, the formation process is discussed and some classic conclusions are derived. Our main results are presented and proved in Section \ref{sec.4}, where we derived the phase transition properties in two main scenarios.

\section{Macroscopic Model for Microtubule Dynamics with Diffusion}\label{sec.2}
Following Mour\~{a}o et all's macroscopic model for MT dynamics, we
consider the following generalized concentration-dependent model for microtubule
growth and shrinkage which includes the diffusion effect:

\begin{equation}\label{ode 1}
\left\{ \begin{array}{l}
\frac{\partial M_g}{\partial t}=-P_{gs}M_g+P_{sg}M_s+N +D_1\Delta M_g\\\\
\frac{\partial M_s}{\partial t}=P_{gs} M_g-P_{sg}M_s-E+D_2\Delta M_s\\\\
\frac{\partial D_f}{\partial t}=-V_gM_g+V_sM_s-N+E+D_3\Delta D_f
\end{array} \right.
\end{equation}
\\
In these equations $M_g$ and $M_s$ represent the number of growing/shrinking microtubules; $D_f$ is the free tubulin concentration. The constants $N$ and $E$ show the nucleation and extinction rate and in general it is assumed that they are  linearly dependent on the concentration of free proteins $D_f$.  The two frequencies $P_{gs}$ and $P_{sg}$ are catastrophe and rescue frequencies and are linearly dependent on $D_f$, i.e. $P_{gs}=-k_7D_f$ and $P_{sg}=k_5D_f$, where $k_5$ and $k_7$ are constants. The two rates $V_g$ and $V_s$ stand for growth and shrinkage rate with linear dependency on tubulin concentration, i.e., $V_g=k_3D_f$ and $V_s=C_1$ where $k_3$ is a constant \cite{MSS}. $D_1$, $D_2$ and $D_3$ are the diffusion rates.
Substituting of these parameters in \ref{ode 1}, will result the following system:

\begin{equation}\label{eq.pde}
\left\{ \begin{array}{l}
\frac{\partial M_g}{\partial t}=-k_7 D_f M_g+k_5 D_f M_s+k_1D_f +D_1\Delta M_g\\\\
\frac{\partial M_s}{\partial t}=k_7 D_f M_g-k_5 D_f M_s-E+D_2\Delta M_s\\\\
\frac{\partial D_f}{\partial t}=-k_3 D_fM_g+V_sM_s-k_1D_f+E+D_3\Delta D_f
\end{array} \right.
\end{equation}

Therefore we consider the system \eqref{eq.pde}
on the spatial domain $\Omega=(0,\ell)$; and we let
\begin{equation}\label{def.f}
\begin{split}
  {{f}_{1}}=&{{f}_{1}}({{M}_{g}},{{M}_{s}},{{D}_{f}})={{k}_{7}}{{D}_{f}}{{M}_{g}}+{{k}_{5}}{{D}_{f}}{{M}_{s}}+{{k}_{1}}{{D}_{f}}, \\
  {{f}_{2}}=&{{f}_{2}}({{M}_{g}},{{M}_{s}},{{D}_{f}})=-{{k}_{7}}{{D}_{f}}{{M}_{g}}-{{k}_{5}}{{D}_{f}}{{M}_{s}}-E, \\
 {{f}_{3}}=&{{f}_{3}}({{M}_{g}},{{M}_{s}},{{D}_{f}})=-{{k}_{3}}{{D}_{f}}{{M}_{g}}+{{C}_{1}}{{M}_{s}}-{{k}_{1}}{{D}_{f}}+E.
\end{split}
\end{equation}
We also assume the system \eqref{eq.pde} is supplemented with usual initial condition and either  Dirichlet boundary conditions
\begin{equation}\label{def.dbc}
  ({M}_{g},{M}_{s},{D}_{f})(0)=({M}_{g},{M}_{s},{D}_{f})(\ell)=0;
\end{equation}
or Neumann boundary conditions
\begin{equation}\label{def.nbc}
   \frac{\partial}{\partial x} ({M}_{g},{M}_{s},{D}_{f})(0)=\frac{\partial}{\partial x}({M}_{g},{M}_{s},{D}_{f})(\ell)=0.
\end{equation}

\section{Early Formation Stage}\label{sec.3}
Formation of microtubule is the result of a balance between different components of the system \eqref{eq.pde}. Like many other pattern forming systems, one naturally assumes that this stage of equilibrium has merged from an earlier state as a result of an initial disequilibrium.
The early behavior of the system can be finely approximated by the linearized system of \eqref{eq.pde}. We therefore will focus here on the linearized system considering the background uniform steady state solution (\ref{def.ss}).
It is easy to see that
\begin{equation}\label{def.ss}
{{M}_{g}}=\frac{k_{1}^{2}{{C}_{1}}}{{{K}_{1}}},{{M}_{s}}=\frac{{{k}_{1}}{{k}_{3}}E}{{{K}_{1}}},
{{D}_{f}}=\frac{E}{{{k}_{1}}},
\end{equation}
where ${{K}_{1}}={{C}_{1}}{{k}_{1}}{{k}_{7}}-{{k}_{3}}{{k}_{5}}E$, is the uniform steady state solution of the equation \ref{eq.pde}. This steady state solution of the system remains always positive if all the parameters are positive and also
\begin{equation}\label{cond.0}
K_1>0.
\end{equation}
We will keep this assumption throughout this paper since we are interested in the realistic situation.

We note that the system of Eqs.~\eqref{eq.pde} defines an abstract evolution equation for a vector-valued function
\[w:t\mapsto w(t)\in H={{({{L}^{2}}(\Omega ))}^{3}}.\]
\begin{equation}\label{def.w}
w(t)=\left( \begin{matrix}
   {{M}_{g}}(\cdot ,t)  \\
   {{M}_{s}}(\cdot ,t)  \\
   {{D}_{f}}(\cdot ,t)  \\
\end{matrix} \right),\quad t\ge 0.
\end{equation}

Now let $\delta:\text{dom}(\delta)\to H$ be defined by the expression	
\begin{equation}\label{def.delta}
\delta =-\Delta I=\left( \begin{matrix}
   -\Delta  & 0 & 0  \\
   0 & -\Delta  & 0  \\
   0 & 0 & -\Delta   \\
\end{matrix} \right).
\end{equation}
where
\begin{equation}\label{}
\text{dom}(\delta)={{H}_{1}}=\{w\in {{({{H}^{2}}(\Omega ))}^{3}}:w=0~\text{on}~\partial \Omega \}
\end{equation}
when \eqref{eq.pde} is considered with \eqref{def.dbc}. Note that $H^2 (\Omega)$ is the usual Sobolev space.

Here we define a new set of conditions, called the zero average condition, as follows
\begin{equation}\label{def.za}
    \int_{\Omega} w_i=0~\text{for}~i=1,2,3;
\end{equation}
and we consider \eqref{eq.pde}  with \eqref{def.nbc} and \eqref{def.za}:
\begin{equation}\label{}
\text{dom}(\delta)={{H}_{1}}=\{w\in {{({{H}^{2}}(\Omega ))}^{3}}:\frac{\partial w_i}{\partial x}=0~\text{on}~\partial \Omega~\text{and}~\int_{\Omega} w_i=0~\text{for}~i=1,2,3~\}.
\end{equation}

Also let $A$ and $D$ be the linear operators in $H$ represented by the constant matrices

\begin{equation}\label{def-AD}
\renewcommand{\arraystretch}{1.5}
A= \begin{pmatrix}
   -\frac{{{k}_{7}}E}{{{k}_{1}}} & \frac{{{k}_{5}}E}{{{k}_{1}}} & 0  \\
   \frac{{{k}_{7}}E}{{{k}_{1}}} & -\frac{{{k}_{5}}E}{{{k}_{1}}} & {{k}_{1}}  \\
   -\frac{{{k}_{3}}E}{{{k}_{1}}} & {{C}_{1}} & -{{K}_{2}}
\end{pmatrix} ,\quad D= \begin{pmatrix}
   {{d}_{1}} & 0 & 0  \\
   0 & {{d}_{2}} & 0  \\
   0 & 0 & {{d}_{3}}  \\
\end{pmatrix} ,
\end{equation}
with ${{K}_{2}}={{k}_{1}}\left( 1+\frac{{{C}_{1}}{{k}_{1}}{{k}_{3}}}{{{K}_{1}}} \right)>0.$
Without loss of generality we can assume ${{k}_{1}}=1,E=1.$  And we let $\kappa =({{k}_{3}},{{k}_{5}},{{k}_{7}})$
and $d=(d_1,d_2,d_3)$.

Now the linearized equation looks like
	\[{{w}_{t}}={{L}_{\lambda }}w.\]
where
\begin{equation}\label{def.L}
{{L}_{\lambda }}=-\delta D+A.
\end{equation}
where $\lambda =(\kappa ,d)$ presents all the control parameters.
Next, let $F={{F}_{\lambda }}:H\to H$ represent the nonlinear terms of the Taylor expansion of $f=({{f}_{1}},{{f}_{2}},{{f}_{3}})$ about the steady state (\ref{def.ss}); that is
\begin{equation}\label{def.F}
\renewcommand{\arraystretch}{1.5}
F(w)=\left( \begin{matrix}
   {{k}_{5}}{{M}_{s}}{{D}_{f}}-{{k}_{7}}{{M}_{g}}{{D}_{f}}  \\
   -{{k}_{5}}{{M}_{s}}{{D}_{f}}+{{k}_{7}}{{M}_{g}}{{D}_{f}}  \\
   -{{k}_{3}}{{M}_{g}}{{D}_{f}}  \\
\end{matrix} \right).
\end{equation}

Finally, then Eq.~(\ref{eq.pde}) corresponds to the abstract evolution equation
\begin{equation}\label{eq.w}
\frac{dw}{dt}={{L}_{\lambda }}w+{{F}_{\lambda }}(w)
\end{equation}
for $w(t)$ in $H$, $t > 0$.

\subsection{Onset of Instabilities}

It is logical to analyze the early behavior of the system by looking at conditions which make some Fourier modes of the solution of \eqref{eq.pde} to become unstable.
Here we will calculate the eigenvalues and eigenfunctions of ${{L}_{\lambda }}$in $H_1$. To this end, assume $({{\rho }_{m}},{{e}_{m}})$ denote the solution to the eigenvalue problem
	\[-\Delta e=\rho e.\]
Then the eigenvalues of ${{L}_{\lambda }}$ are that of its $m^{th}$-component
\begin{equation}\label{def.Em}
{{E}_{m}}=E(\kappa ,d)=\begin{pmatrix}
   -{{k}_{7}}-{{d}_{1}}{{\rho }_{m}} & {{k}_{5}} & 0  \\
   {{k}_{7}} & -{{k}_{5}}-{{d}_{2}}{{\rho }_{m}} & 1  \\
   -{{k}_{3}} & {{C}_{1}} & -{{K}_{2}}-{{d}_{3}}{{\rho }_{m}}  \\
\end{pmatrix}.
\end{equation}
Assume ${{\sigma }_{mi}}$ for $m\in \mathbb{N}$ and $i=1,2,3$ represent eigenvalues of ${{E}_{m}}$; we denote the corresponding eigenvectors by ${{\omega }_{ki}}$. By a straightforward calculation we have
\begin{equation}\label{def.omega}
 {{\omega }_{mi}}=\begin{pmatrix}
   {{k}_{5}}  \\
  {{d}_{1}}\rho_m +{{k}_{7}}+{{\sigma }_{mi}}  \\
   \left({{d}_{1}}\rho_m +{{k}_{7}}+{{\sigma }_{mi}} \right)\left({{d}_{2}}\rho_m +{{k}_{5}}+{{\sigma }_{mi}} \right)-{{k}_{5}}{{k}_{7}}  \\
\end{pmatrix}
\end{equation}
It is obvious that ${{\sigma }_{ki}}$`s are eigenvalues of $L_{\lambda}$ and the corresponding eigenfunctions are given by
	${{w}_{ki}}={{\omega }_{ki}}{{e}_{k}}.$
It is easy to verify that the conjugate operator   has the same eigenvalues with eigenfunctions
	$w_{ki}^{*}=\omega _{ki}^{*}{{e}_{k}},$
where  $\omega _{mi}^{*}$ are eigenvectors of $E_{m}^{*}(\kappa ,d)$, and we have
\begin{equation}\label{def.omegast}
\omega _{mi}^{*}= \begin{pmatrix}
   {{C}_{1}}{{k}_{7}}-\left({{d}_{2}}\rho_m +{{k}_{5}}+{{\sigma }_{m,i}} \right){{k}_{3}}  \\
   \left( {{d}_{1}}\rho_m +{{k}_{7}}+{{\sigma }_{m,i}} \right){{C}_{1}}-{{k}_{3}}{{k}_{5}}  \\
   \left( {{d}_{2}}\rho_m +{{k}_{5}}+{{\sigma }_{m,i}} \right)\left( {{d}_{1}}\rho_m +{{k}_{7}}+{{\sigma }_{m,i}} \right)-{{k}_{5}}{{k}_{7}}
\end{pmatrix}.
\end{equation}

Now we assume
	\[{{\Lambda }_{k}}=\left\{ (\kappa,d) |det({{E}_{k}}(\kappa ,d))=0 \right\}.\]
We note that
	\[\frac{\partial {{\Lambda }_{1}}}{\partial (\kappa ,d)}\ne 0\]
If we assume
\begin{equation}\label{cond.1}
  {{k}_{5}}{{K}_{2}}\ne {{C}_{1}}\text{ and }{{C}_{1}}\ne {{d}_{2}}{{d}_{3}}\rho _{1}^{2}+{{d}_{2}}{{K}_{2}}{{\rho }_{1}}.
\end{equation}
Therefore $\Lambda_1$ divides the space to two distinct regions which we denote by ${{\Lambda }^{-}}$  and $\Lambda^+$. We then have the following lemma:
\begin{lemma}[Exchange of Stability]\label{lemma.pes}
{\rm If the following condition is satisfied
\begin{equation}\label{cond.2}
{{k}_{5}}{{K}_{2}}-{{C}_{1}}>0,
\end{equation}
then we will have
\begin{equation}\label{}
{{\sigma }_{11}}(\kappa ,d)\begin{cases}
<0 & \text{if} \quad (\kappa ,d)\in {{\Lambda }^{-}},\\
 =0 & \text{if} \quad (\kappa ,d)\in {{\Lambda }^{0}},  \\
 >0 & \text{if} \quad (\kappa ,d)\in {{\Lambda }^{+}};
\end{cases}
\end{equation}
and also we have
	\[{{\sigma }_{12}}(\kappa ,d),{{\sigma }_{13}}(\kappa ,d)<0 \text{ if } (\kappa ,d)\in {{\Lambda }^{0}};\]
moreover, for $m>2$ and $i=1,2,3$ we have
	\[{{\sigma }_{mi}}(\kappa ,d)<0 \text{ if } (\kappa ,d)\in {{\Lambda }^{0}}.\]
}\end{lemma}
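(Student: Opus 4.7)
The plan is to apply the Routh--Hurwitz criterion to the characteristic polynomial of $E_m$. Writing $\chi(\sigma;\rho) := \det(\sigma I - (A-\rho D)) = \sigma^3 + p(\rho)\sigma^2 + q(\rho)\sigma + r(\rho)$, the eigenvalues $\sigma_{mi}$ are the roots of $\chi(\,\cdot\,;\rho_m)$, and by Routh--Hurwitz all three lie in the open left half-plane if and only if $p(\rho), r(\rho) > 0$ and $p(\rho)q(\rho) > r(\rho)$.

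First I would expand $p, q, r$ as polynomials in $\rho$ using the standard cofactor formula $\det(A-\rho D) = \sum_{T \subseteq \{1,2,3\}} (-\rho)^{|T|}\prod_{i\in T}d_i\cdot\det A_{T^c,T^c}$. A short computation, using the explicit entries of $A$ and the fact that its three principal $2\times 2$ minors equal $k_5 K_2 - C_1$, $k_7 K_2$, and $0$, shows that \emph{every} coefficient of $p(\rho)$ and $q(\rho)$, and every coefficient of $\rho, \rho^2, \rho^3$ in $r(\rho)$, is strictly positive under hypothesis \eqref{cond.2}, while $r(0) = -K_1 < 0$. In particular $p(\rho), q(\rho) > 0$ for all $\rho \geq 0$, and $r$ is strictly increasing on $[0,\infty)$.

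For the critical mode $m=1$ on $\Lambda^0$, the defining equation of $\Lambda^0$ gives $r(\rho_1) = 0$, so $\chi(\sigma;\rho_1)$ factors as $\sigma\bigl(\sigma^2 + p(\rho_1)\sigma + q(\rho_1)\bigr)$. The quadratic factor has negative trace and positive determinant, so its two roots (which I identify with $\sigma_{12}, \sigma_{13}$) lie in the open left half-plane. For the sign of $\sigma_{11}$ off $\Lambda^0$, the transversality assumption \eqref{cond.1} makes $\Lambda^0$ a smooth hypersurface across which $\det E_1$ changes sign; the implicit function theorem produces a smooth simple-eigenvalue branch $\sigma_{11}(\kappa,d)$ near $\Lambda^0$, and the relation $\sigma_{11}\sigma_{12}\sigma_{13} = \det E_1$, combined with $\sigma_{12}\sigma_{13}$ staying close to $q(\rho_1) > 0$ locally, forces $\operatorname{sign}(\sigma_{11}) = \operatorname{sign}(\det E_1)$. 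Naming $\Lambda^\pm$ as the components on which $\det E_1$ is positive or negative then yields the stated trichotomy.

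The main obstacle is the case $m \geq 2$ on $\Lambda^0$. Since $r$ is strictly increasing on $[0,\infty)$ with $r(\rho_1) = 0$, one immediately has $r(\rho_m) > 0$ for every $m \geq 2$, so the second R--H condition is free. For the third, I would verify $p(\rho)q(\rho) > r(\rho)$ \emph{uniformly} in $\rho \geq 0$ by expanding $D(\rho) := p(\rho)q(\rho) - r(\rho)$ and checking each coefficient is strictly positive. The leading $\rho^3$ coefficient is $(d_1+d_2+d_3)(d_1d_2+d_1d_3+d_2d_3) - d_1d_2d_3$, positive by an elementary expansion; the constant term is positive by \eqref{cond.2}; and the two middle coefficients can be regrouped so that \eqref{cond.2} makes each piece manifestly nonnegative (e.g.\ the $\rho$-coefficient contains $(d_1+d_3)k_7 K_2 + (d_2+d_3)(k_5 K_2 - C_1)$ after the appropriate cancellation). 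Hence $D(\rho) > 0$ for all $\rho \geq 0$ and in particular $D(\rho_m) > 0$ for every $m \geq 2$, completing the Routh--Hurwitz verification and giving $\operatorname{Re}\sigma_{mi}(\kappa,d) < 0$ for all $m \geq 2$ and $i = 1,2,3$.
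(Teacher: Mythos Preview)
Your argument is correct. For the $m=1$ block your treatment coincides with the paper's: both read off the characteristic polynomial, use $\operatorname{tr}E_1<0$ and the positivity of the sum of the $2\times 2$ principal minors (the paper's inequality \eqref{cond.dm} carries a sign typo; the computation you indicate gives $q(\rho)>0$, which is what is actually needed), and conclude that on $\Lambda^0$ the cubic factors as $\sigma(\sigma^2+p\sigma+q)$ with the quadratic stable.

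Where you genuinely diverge from the paper is the case $m\ge 2$. The paper does \emph{not} check the third Routh--Hurwitz inequality. Instead it invokes the rescaling identity $E_m(\kappa,d)=E_1(\kappa,\rho_m\rho_1^{-1}d)$ together with the monotonicity of $\rho\mapsto\det E(\rho)$ to place the rescaled parameter in $\Lambda^{-}$ and then asserts that all eigenvalues of $E_1$ are negative there. That route is short and conceptual, but as written it leaves open why a complex-conjugate pair among $\sigma_{12},\sigma_{13}$ cannot cross the imaginary axis somewhere inside $\Lambda^{-}$; ruling this out is precisely the content of $pq>r$. Your direct verification that every coefficient of $D(\rho)=p(\rho)q(\rho)-r(\rho)$ is strictly positive (the regroupings you sketch for the $\rho$- and $\rho^2$-coefficients do go through, e.g.\ $P_1Q_0-R_1=(d_1+d_3)k_7K_2+(d_2+d_3)(k_5K_2-C_1)>0$) closes this gap at the cost of a longer computation. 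So the trade-off is economy versus completeness: the paper's scaling trick is elegant and reusable but needs an extra word to exclude Hopf-type crossings, while your Routh--Hurwitz check is heavier but self-contained.
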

\begin{proof}
 Assume the characteristic polynomial of   is given by
	\[{{\sigma }^{3}}+p_{2}^{m}{{\sigma }^{2}}+p_{1}^{m}\sigma +p_{0}^{m}=0.\]
We note that \[\underset{i=1}{\overset{3}{\mathop \prod }}\,{{\sigma }_{mi}}=-p_{0}^{m}=\det ({{E}_{m}}).\] It can be easily check that
\begin{equation}\label{cond.tr}
\sum\limits_{i=1}^{3}{{{\sigma }_{mi}}}=-p_{2}^{m}=\text{trace}({{E}_{m}})<0\text{ for all m}\in \mathbb{N}.
\end{equation}
Also, by (\ref{cond.2})  we will have
\begin{equation}\label{cond.dm}
\sum\limits_{\begin{subarray}{c}i,j=1 \\ i\ne j \end{subarray}}^{3}\,
{{\sigma }_{mi}}{{\sigma }_{mj}}=p_{1}^{m}<0\text{  for all m}\in \mathbb{N}.
\end{equation}
Now, on $\Lambda^0$ we have ${\underset{i=1}{\overset{3}\prod}}\,\sigma_{1i}=0$. By (\ref{cond.tr}) and (\ref{cond.dm}) only one of these eigenvalues must be zero on $\Lambda^0$. We denote this critical eigenvalue  by $\sigma_{11}$. Since $\sigma_{11}$ depends continuously on the control parameter, it changes sing when the control parameter crosses $\Lambda^0$. We denoted the two distinct regions by ${{\Lambda }^{-}}$  and $\Lambda^+$. Moreover, by (\ref{cond.tr}) and (\ref{cond.dm}), the other two eigenvalues have to be negative. That is
\[{{\sigma }_{12}}(\kappa ,d)<0,{{\sigma }_{13}}(\kappa ,d)<0 \text{ if } (\kappa ,d)\in {{\Lambda }^{0}}.\]
To prove the last claim, we should note that
\begin{equation}\label{id.Em}
{{E}_{m}}(\kappa ,d)=E({{\rho }_{m}},\kappa ,d)=E({{\rho }_{1}},\kappa ,{{\rho }_{1}}\rho _{m}^{-1}d)={{E}_{1}}(\kappa ,{{\rho }_{1}}\rho _{m}^{-1}d).
\end{equation}
Also by (\ref{cond.C}) the determinant of the matrix $E(\rho )$ is decreasing in $\rho$. Therefore, for $({{\kappa }^{0}},{{d}^{0}})\in {{\Lambda }^{0}}$, we will have
\[\det ({{E}_{m}}({{\kappa }^{0}},{{d}^{0}}))<\det ({{E}_{1}}({{\kappa }^{0}},{{d}^{0}}))=0.\]
Hence from \ref{id.Em} we can conclude
$({{\kappa }^{0}},{{\rho }_{1}}\rho _{m}^{-1}{{d}^{0}})\in {{\Lambda }^{-}}.$
This obviously means that all the eigenvalues of
${{E}_{1}}({{\kappa }^{0}},{{\rho }_{1}}\rho _{m}^{-1}{{d}^{0}})$
are negative; hence all the eigenvalues of
${{E}_{m}}({{\kappa }^{0}},{{d}^{0}})$ for $m>1$
are negative. This completes the proof.
\end{proof}
\section{Transitions}\label{sec.4}
In the previous section we introduced the threshold of instabilities  for the system \eqref{eq.pde}. Lemma \ref{lemma.pes} states that when the control parameter of the system $\lambda$ crosses the instability threshold, the system undergoes a change of stability. When $\lambda$ is near the instability threshold, one Fourier mode of the solution, called the principal mode, becomes unstable. This should eventually lead the solution to a stable situation. However, this is not clear from the linear analysis. It is known that any type of pattern formation is due to the presence of nonlinear components in the system.

In order to study the transitions of the nonlinear system \eqref{eq.pde}, we will study the qualitative behavior of its solutions when $\lambda$ stays close enough to the instability threshold $\Lambda^0$. Consider the system
\begin{equation}\label{eq.w}
 {{w}_{t}}={{L}_{\lambda }}(w)+{{F}_{\lambda }}(w).
\end{equation}
we aim to drive a transition number $\bif$ which will provide comprehensive information about transitional behavior of the solutions at $\lambda$.
Based on the center manifold theorem, for any control parameter $\lambda$ in the vicinity of $\Lambda^0$, we have
\begin{equation}\label{def.y}
w=y{{w}_{11}}+\Phi (y)
\end{equation}
where $w$ is the solution \eqref{eq.pde}, ${{w}_{11}}$ is the principal mode  and $y$ is its amplitude. The error term in fact depends on the leading amplitude $y$.

{\theorem \label{th.dbc}  Consider the system of equations \eqref{eq.w} with \eqref{def.dbc}.
If $\lambda$ is close to the critical parameter $\lambda_0 \in \Lambda^0$, and all components $\lambda_0$ satisfy conditions \eqref{cond.0}, \eqref{cond.1},\eqref{cond.2}, and $d_2>d_1$, then the following assertions hold true
\begin{enumerate}
\item \eqref{eq.w} has a mixed transition from $(0,\lambda_0)$. More precisely, there exists an open neighborhood $U$ of $w = 0$
such that $U$ is separated into two disjoint open sets, ${U^{\lambda}_1}$ and  ${U^{\lambda}_2}$,
by the stable manifold $\Gamma$ of $w=0$ with codimension one in $H$ satisfying

\item \eqref{eq.w} bifurcates from $(0,\lambda_0)$ to a unique
saddle point $w^{\lambda}$ (with Morse index one) on $\lambda \in
\Lambda_{-}$, and to a unique attractor $w^{\lambda}$ on $\lambda \in
\Lambda_+$.
\begin{enumerate}
  \item ${U} = {U^{\lambda}_1} + {U^{\lambda}_2}$
  \item the transition in ${U^{\lambda}_1}$ is jump, and
  \item the transition in ${U^{\lambda}_2}$ is continuous. The local transition structure is as shown in Fig.~\ref{fig.dbc}.
\end{enumerate}
\item \eqref{eq.w} bifurcates in ${U^{\lambda}_2}$
 to a unique singular point $w^{\lambda}$ for $\lambda \in \Lambda^+$, which is an
attractor such that for any $\varphi \in {U^{\lambda}_2}$ we have
\[\lim_{t\rightarrow \infty}||w(t,\varphi) - w^{\lambda}||_H = 0\]
\item \eqref{eq.w} bifurcates for $\lambda \in \Lambda^-$ to a unique
saddle point $w^{\lambda}$ with the Morse index
one.
\item  Near $\lambda_0$, the bifurcated
singular points $w_{\lambda}$ can be expressed as
$$ w^{\lambda}= \tfrac{-\sigma_{11}(\lambda)}{\alpha} w_{11} + o(|\sigma_{11}(\lambda)|). $$
where $w_{11}$ is given in \eqref{def.omega}.
\end{enumerate}}

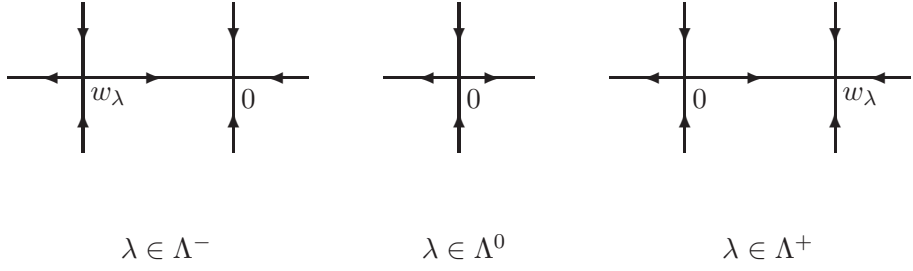
\begin{figure}
\begin{picture}(40,45)
\setlength{\unitlength}{1mm}
\thicklines
\put(0, 20){\line(1,0){40}}
\put(10,10){\line(0,1){20}}
\put(30,10){\line(0,1){20}}
\put (15,-4){$\lambda \in \Lambda^-$}
\put (11,17){$w_\lambda$}
\put (31,16){$0$}
\put(5,20){\vector(-1,0){.5}}
\put(20,20){\vector(1,0){.5}}
\put(35,20){\vector(-1,0){.5}}
\put(10,15){\vector(0,1){.5}}
\put(10,25){\vector(0,-1){.5}}
\put(30,15){\vector(0,1){.5}}
\put(30,25){\vector(0,-1){.5}}

\put(50, 20){\line(1,0){20}}
\put(60,10){\line(0,1){20}}
\put (55,-4){$\lambda \in \Lambda^0$}
\put (61,16){$0$}
\put(60,15){\vector(0,1){.5}}
\put(60,25){\vector(0,-1){.5}}
\put(55,20){\vector(-1,0){.5}}
\put(65,20){\vector(1,0){.5}}

\put(80, 20){\line(1,0){40}}
\put(90,10){\line(0,1){20}}
\put(110,10){\line(0,1){20}}
\put(90,15){\vector(0,1){.5}}
\put(90,25){\vector(0,-1){.5}}
\put(110,15){\vector(0,1){.5}}
\put(110,25){\vector(0,-1){.5}}
\put(85,20){\vector(-1,0){.5}}
\put(100,20){\vector(1,0){.5}}
\put(115,20){\vector(-1,0){.5}}

\put (95,-4){$\lambda \in \Lambda^+$}
\put (91,16){$0$}
\put (111,17){$w_\lambda$}
\end{picture}
\vspace{8pt}
\caption{\footnotesize Topological structure of the dynamic bifurcation
of the \eqref{eq.w} equation with \eqref{def.dbc}. The horizonal line represents the center manifold.}
\label{fig.dbc}
\end{figure}
\begin{proof}[Proof of Th.~\ref{th.dbc}]
For $\lambda$ close to  $\lambda_0 \in \Lambda^0$, we project the system to its first eigenspace; therefore,
	\[\left\langle {{w}_{t}},w_{11}^{*} \right\rangle =\left\langle {{L}_{\lambda }}(w)+{{F}_{\lambda }}(w),w_{11}^{*} \right\rangle =\left\langle {{L}_{\lambda }}(w),w_{11}^{*} \right\rangle +\left\langle {{F}_{\lambda }}(w),w_{11}^{*} \right\rangle .\]
where $\left\langle , \right\rangle$ is the inner product in ${{L}^{2}}(\Omega )$. We note that
	\[\left\langle {{w}_{t}},w_{11}^{*} \right\rangle \text{= }\frac{dy}{dt}\left\langle {{w}_{11}},w_{11}^{*} \right\rangle \text{  and   }\left\langle {{L}_{\lambda }}(w),w_{11}^{*} \right\rangle ={{\sigma }_{11}}(\lambda )\left\langle {{w}_{11}},w_{11}^{*} \right\rangle \]
By the classic center manifold theorem, we can easily see that
	\[\left\langle {{F}_{\lambda }}(w),w_{11}^{*} \right\rangle ={{y}^{2}}\left\langle {{F}_{\lambda }}({{w}_{11}}),w_{11}^{*} \right\rangle +o(|y{{|}^{2}})\]
Therefore we can easily drive the following reduced center manifold equation at $\lambda_0$
	\[\frac{dy}{dt}=\alpha ({{\lambda }_{0}}){{y}^{2}}+o({{y}^{2}});\]
where $\alpha ({{\lambda }_{0}})=\frac{\left\langle {{F}_{{{\lambda }_{0}}}}({{w}_{11}}),w_{11}^{*} \right\rangle }{\left\langle {{w}_{11}},w_{11}^{*} \right\rangle }$ is a constant. It is a straightforward calculation to see
	\[\alpha ({{\lambda }_{0}})=\frac{\left\langle {{F}_{{{\lambda }_{0}}}}({{w}_{11}}),w_{11}^{*} \right\rangle }{\left\langle {{w}_{11}},w_{11}^{*} \right\rangle }=\frac{\int\limits_{0}^{L}{e_{1}^{3}}}{\int\limits_{0}^{L}{e_{1}^{2}}}\frac{{{F}_{{{\lambda }_{0}}}}(\omega )\cdot {{\omega }^{*}}}{\omega \cdot {{\omega }^{*}}}=\frac{8}{3\pi }\frac{{{F}_{{{\lambda }_{0}}}}(\omega )\cdot {{\omega }^{*}}}{\omega \cdot {{\omega }^{*}}}\]
is nonzero (in fact, negative) if ${{d}_{1}}>{{d}_{2}}$. For the sake of simplicity in notations, we
drop the indices and simply write $\omega=\omega_{11}$ and $\omega^*=\omega_{11}^*$.
Now when $\lambda$ lies near $\lambda_0$, the reduced center manifold equation is approximated as
	\[\frac{dy}{dt}={{\sigma }_{11}}(\lambda )y+\alpha (\lambda ){{y}^{2}}+o({{y}^{2}})\]
where  \[\underset{\lambda \to {{\lambda }_{0}}}{\mathop{\lim }}\,\alpha (\lambda )=\alpha ({{\lambda }_{0}})<0.\] If the solutions are regular, one can approximate the solution of the above equation with
	\[\frac{dy}{dt}={{\sigma }_{11}}(\lambda )y+\alpha (\lambda ){{y}^{2}}.\]
A simple examination of the asymptotic behavior of this last equation shows that the only non-trivial steady state solution is
	\[y=-\frac{{{\sigma }_{11}}(\lambda )}{\alpha (\lambda )}\]
which is an attractor when ${{\sigma }_{11}}(\lambda )>0$ and is a repeller  when ${{\sigma }_{11}}(\lambda )<0$.
\end{proof}


When the system \eqref{eq.pde} is considered with the Neumann boundary conditions, driving a feasible expresion from
\[\frac{\left\langle {{F}_{\lambda }}(y{{w}_{11}}+\Phi (y)),w_{11}^{*} \right\rangle }{\left\langle {{w}_{11}},w_{11}^{*} \right\rangle }\]
is not as easy anymore. In this case,
\begin{equation}\label{}
    \left\langle {{F}_{\lambda }}(y{{w}_{11}}),w_{11}^{*} \right\rangle=0;
\end{equation}
therefore, we have to proceed to another stage of higher order approximation. The difficulty lies in driving a good approximation of $\langle{{F}_{\lambda}}(y{{w}_{11}}+\Phi(y))\:,\:w_{11}^{*}\rangle$. To overcome this difficulty, one normally resorts to a simple Taylor approximation of the implicit center manifold function. However, we will use another approximation method suggested by Ma and Wang \cite{mw-b}. Using their approach will lead us to the following approximation
	\[\frac{\left\langle {{F}_{\lambda }}(w),w_{11}^{*} \right\rangle }{\left\langle {{w}_{11}},w_{11}^{*} \right\rangle }=\bif{{y}^{3}}+o(|y{{|}^{3}}).\]
where $\bif$ is the transition number which we will drive later. Therefore the transition equation near the threshold of instabilities reads as
	\[\dot{y}=y{{\sigma }_{11}}+\bif{{y}^{3}}+o(|y{{|}^{3}}).\]
This suggest the existence of a pitchfork bifurcation as the control parameter $\lambda =(\kappa ,d)$ crosses the critical threshold ${{\Lambda }^{0}}$.
We have the following  theorem
\begin{theorem} \label{th.nbc}
Consider the system (\ref{eq.w}) with \eqref{def.nbc} and \eqref{def.za}. If $\lambda$ is close to the critical parameter $\lambda_0 \in \Lambda^0$, and all components of $\lambda_0$ satisfy conditions \eqref{cond.0}, \eqref{cond.1},and \eqref{cond.2}, then
the following assertions hold true
\begin{description}
  \item[Case 1] If $\bifc< 0$, the transition of (\ref{eq.w}) over $\lambda_0$ is continuous (type I); moreover,
  \begin{enumerate}
\item  the trivial solution $w_{\lambda_0} = 0$ is a locally asymptotically stable equilibrium point of the system (\ref{eq.w}) at $\lambda_0$;
\item after bifurcation, the solution $w$ of the system (\ref{eq.w}) will asymptotically tend to either $w_\lambda^+$ or $w_\lambda^-$,
    where
\begin{equation}
  w_\lambda^{\pm}  = {\pm} (\sigma_{k1}/|\bif|)^{1/2} w_{k1}
  + \epsilon_\lambda ,
\label{def.pitch}
\end{equation}
where
$\|\epsilon_\lambda\|_H = o (\sigma_{k1}^{1/2})$.(see Fig.~\ref{fig.pitch})
\end{enumerate} 
  \item[Case 2] If $\bifc > 0$, the transition of (\ref{eq.w}) over $\lambda_0$ is a jump (type II). The steady state solutions are metastable after bifurcation. In this case, points $w_\lambda^+$ and $w_\lambda^-$ will be repellers.
       \item[Case 3] If $\bifc = 0$, the transition of (\ref{eq.w}) over $\lambda_0$ is mixed (type III).
\end{description}
\end{theorem}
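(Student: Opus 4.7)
The plan is to extend the reduction from the proof of Theorem~\ref{th.dbc} one order higher, because with Neumann data and the zero-average constraint the quadratic contribution that governed the Dirichlet case vanishes. Projecting \eqref{eq.w} onto the critical eigenfunction $w_{11}^*$ and using $w = y w_{11} + \Phi(y)$ gives
\begin{equation*}
\frac{dy}{dt} = \sigma_{11}(\lambda)\, y + \frac{\langle F_\lambda(y w_{11} + \Phi(y)), w_{11}^*\rangle}{\langle w_{11}, w_{11}^*\rangle}.
\end{equation*}
The first step is to verify $\langle F_\lambda(y w_{11}), w_{11}^*\rangle \equiv 0$: the spatial factor is $e_1^2$ with $e_1(x) = \cos(\pi x/\ell)$, while $w_{11}^*$ carries the factor $e_1$, so the quadratic projection is proportional to $\int_0^\ell \cos^3(\pi x/\ell)\, dx = 0$. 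Hence the leading correction must come from the interaction between $w_{11}$ and the center-manifold function $\Phi(y)$.

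The second step is to compute the cubic coefficient $\bifc$ via the Ma--Wang improved approximation of $\Phi$. Writing $\Phi(y) = y^2 \Phi_2 + o(y^2)$ with $\Phi_2$ in the stable subspace, $\Phi_2$ is obtained from the elliptic system $-L_{\lambda_0} \Phi_2 = P_s\, F_{\lambda_0}(w_{11}, w_{11})$, where $P_s$ is the spectral projection onto the stable directions. The spatial factor $e_1^2 = \tfrac{1}{2}(1 + \cos(2\pi x/\ell))$ splits $F_{\lambda_0}(w_{11}, w_{11})$ into a constant part and a second Fourier mode; the constant part is inadmissible in $H_1$ by the zero-average constraint, while on the second mode $L_{\lambda_0}$ is invertible by Lemma~\ref{lemma.pes}, yielding a unique $\Phi_2$. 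Plugging back into the projection gives
\begin{equation*}
\frac{\langle F_\lambda(y w_{11} + \Phi(y)), w_{11}^*\rangle}{\langle w_{11}, w_{11}^*\rangle} = \bifc\, y^3 + o(|y|^3),
\end{equation*}
where $\bifc$ is a computable rational function of $\kappa$ and $d$ at $\lambda_0$, essentially proportional to $\langle F_{\lambda_0}(w_{11}, \Phi_2), w_{11}^*\rangle$.

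The third step is a standard pitchfork analysis of the reduced equation $\dot y = \sigma_{11}(\lambda) y + \bifc y^3 + o(|y|^3)$. If $\bifc < 0$, the trivial equilibrium is asymptotically stable at $\lambda_0$ and bifurcates supercritically into two attracting branches $y = \pm(\sigma_{11}(\lambda)/|\bifc|)^{1/2}$ for $\sigma_{11}(\lambda) > 0$; lifting back through $w = y w_{11} + \Phi(y)$ with $\|\Phi(y)\|_H = O(y^2)$ gives \eqref{def.pitch}. If $\bifc > 0$, the same branches appear on the opposite side of $\Lambda^0$ but are repelling, producing a subcritical jump transition. If $\bifc = 0$ the cubic term degenerates and higher-order terms decide, which is precisely Ma--Wang's mixed type III classification \cite{MW-B2}. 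In each case I would invoke the Ma--Wang attractor bifurcation / dynamic transition theorem to lift the conclusions from the reduced ODE to the full PDE \eqref{eq.w}.

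The main obstacle is verifying that the Ma--Wang expansion is sharp enough: one must control the error $\Phi(y) - y^2 \Phi_2$ in $H$, which hinges on the resolvent of $L_{\lambda_0}$ being bounded on the complement of the critical eigenspace. This in turn uses condition \eqref{cond.1} to rule out accidental zero eigenvalues and Lemma~\ref{lemma.pes} to guarantee that every other eigenvalue is strictly negative. A secondary delicate point, only visible once the explicit formula for $\bifc$ is in hand, is an algebraic sign analysis showing that $\bifc$ really can take each of the three signs as $\kappa$ and $d$ vary on $\Lambda^0$, so that all three cases of the theorem are non-vacuous.
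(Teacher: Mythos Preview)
Your proposal is correct and follows essentially the same route as the paper: project onto $w_{11}^*$, observe that the quadratic term vanishes because $\int_0^\ell e_1^3=0$ in the Neumann case, invoke the Ma--Wang approximation (the paper's Lemma~\ref{lemma.cmf}) to resolve $\Phi$ on the second Fourier mode, and read off the cubic coefficient $\bifc$ governing a pitchfork. The paper carries this out by expanding $w$ in the eigenbasis, isolating the index set $\I=\{2\}$, and packaging the bilinear interactions into auxiliary quantities $B^j(I)$ and a vector $B$, whereas you phrase the same computation as solving $-L_{\lambda_0}\Phi_2=P_s F_{\lambda_0}(w_{11},w_{11})$; these are equivalent, and your use of $e_1^2=\tfrac12(1+e_2)$ together with the zero-average constraint to kill the constant part matches the paper's implicit restriction to modes $m\geq 1$.
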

\begin{figure}[htb]
\begin{center}
 {\setlength{\unitlength}{0.7mm}
\begin{picture}(60,70)
  \thicklines
  \put(10,0){\line(0,1){63}}
  \put(-15, 30){\line(1,0){25}}
  \put(10,30){\circle*{1}}
  \put(13, 30){\line(1,0){3}}
  \put(19, 30){\line(1,0){3}}
  \put(25, 30){\line(1,0){3}}
  \put(31, 30){\line(1,0){3}}
  \put(37, 30){\line(1,0){3}}
  \put(44,30){\vector(1,0){7}}
  \put(1, 20){\line(0,1){20}}
  \put(1,25){\vector(0,1){.5}}
  \put(1,35){\vector(0,-1){.5}}
  \put(1,30){\circle*{1}}
\qbezier(35.000,50.000)(-15.000,30.000)
         (35.000,10.000)
         \put(25, 0){\line(0,1){60}}
    \put(25,20){\vector(0,-1){.5}}
    \put(25,40){\vector(0,1){.5}}
    \put(25,50){\vector(0,-1){.5}}
    \put(25,10){\vector(0,1){.5}}
    \put(25, 30){\circle{1}}
     \put(25, 14.45){\circle*{1}}
     \put(25, 45.3){\circle*{1}}
     \put(26, 15){$w^{-}$}
     \put(26, 43.5){$w^{+}$}
   \put(8,65){$\Lambda_0$}
   \put(-33,63){Stability Region}
   \put(26,63){Instability Region}
   \put(4,25){$\lambda_0$}
   \put(48,25){$\lambda$}
\end{picture}}
\end{center}
\caption{supercritical bifurcation to an attractor
$\mA = \{w^{+}, w^{-} \}$.}
\label{fig.pitch}
\end{figure}
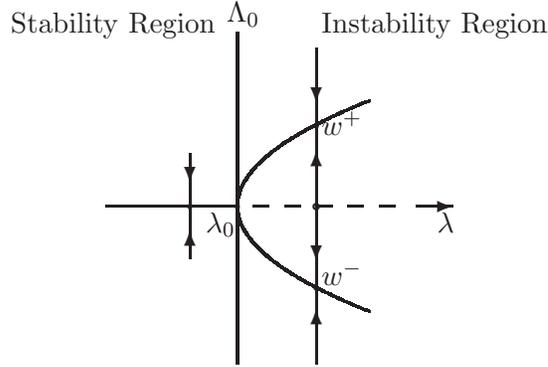
The calculation of the parameter $\bif$ becomes, therefore, very crucial.
This calculation is lengthy and tedious, but it can reveal valuable information
regarding the number of the steady state solution and the type of the bifurcation and transition.

Before we proceed with the proof of Th.\ref{th.nbc}, we state a lemma due to Ma and Wang, but we refer the reader to \cite{MW-B1} for a proof.
\begin{lemma}[Approximation of the center manifold function]\label{lemma.cmf}
Assume $w=\sum\limits_{I\in \C} y_I w_I +\Phi$ is the solution of \ref{eq.w} where $\C$ is the set of critical indices. Define
	\[{{E}_{1}}=\text{span}\{{{w}_{I}}|I\in \C\},E_2=E_{1}^{\bot}, \mathcal{L}=L_{\lambda }|_{E_1};\]
and let $\mathcal{P}_2:H \to E_2$ be the Leray projection. Then
	\[-\mathcal{L}^{-1}(\Phi (y))=\mathcal{P}_2(F(\sum\limits_{I\in \C}y_I w_I))+o(|y|^2)+O(|\sigma_I| |y|^2)\]
\end{lemma}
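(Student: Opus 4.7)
The plan is to exploit the invariance of the center manifold under the flow generated by \eqref{eq.w}. Substituting the decomposition $w = \sum_{I\in\C} y_I w_I + \Phi(y)$ with $\Phi(y)\in E_2$ into the abstract equation $w_t = L_\lambda w + F(w)$ and projecting onto the stable complement $E_2$ yields the functional equation that characterises $\Phi$:
\[
D\Phi(y)\cdot \dot y \;=\; L_\lambda \Phi(y) \;+\; \mathcal{P}_2 F\!\Big(\sum_{I\in\C} y_I w_I + \Phi(y)\Big).
\]
Solving this for $L_\lambda \Phi(y)$ and then inverting $\mathcal{L}$ on $E_2$ will produce the advertised approximation formula.

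First I would invoke the basic regularity furnished by the classical center manifold theorem: $\Phi$ is tangent to $E_1$ at the origin, so $\Phi(y) = O(|y|^2)$ and $D\Phi(y) = O(|y|)$. Combined with the reduced dynamics $\dot y_I = \sigma_I(\lambda)\, y_I + O(|y|^2)$ this immediately gives
\[
D\Phi(y)\cdot \dot y \;=\; O(|\sigma_I|\,|y|^2) \;+\; O(|y|^3).
\]
Next, since $F$ vanishes quadratically at the background steady state, a Taylor expansion together with $\Phi(y) = O(|y|^2)$ yields
\[
\mathcal{P}_2 F\!\Big(\sum_{I\in\C} y_I w_I + \Phi(y)\Big) \;=\; \mathcal{P}_2 F\!\Big(\sum_{I\in\C} y_I w_I\Big) \;+\; O(|y|^3).
\]
Substituting both estimates into the invariance equation produces $-L_\lambda \Phi(y) = \mathcal{P}_2 F\bigl(\sum_{I\in\C} y_I w_I\bigr) + o(|y|^2) + O(|\sigma_I|\,|y|^2)$, and applying $\mathcal{L}^{-1}$ delivers the stated identity.

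The principal obstacle is to ensure that $\mathcal{L}$ is boundedly invertible on $E_2$ uniformly for $\lambda$ in a neighborhood of $\lambda_0$. This relies on Lemma \ref{lemma.pes}: the non-critical eigenvalues $\sigma_{12},\sigma_{13}$ and all $\sigma_{mi}$ for $m>1$ are strictly negative at $\lambda_0$ and depend continuously on $\lambda$, so the spectrum of $L_\lambda|_{E_2}$ is confined to a half-plane $\{\operatorname{Re} z \le -c\}$ for some $c>0$ throughout a small neighborhood of $\lambda_0$, giving a uniform bound on $\|\mathcal{L}^{-1}\|$. A secondary technical point is to justify that the classical center-manifold theorem applies to the semilinear parabolic system \eqref{eq.w} on $H = (L^2(\Omega))^3$; this amounts to verifying that $-\delta D + A$ is sectorial on $H$ with compact resolvent and that the quadratic nonlinearity $F$ of \eqref{def.F} is smooth as a map between suitable fractional power spaces, both of which are standard in this setting.
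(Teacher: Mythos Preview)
The paper does not give its own proof of this lemma; it explicitly attributes the result to Ma and Wang and refers the reader to \cite{MW-B1}. So there is no in-paper argument to compare against. Your derivation via the invariance equation, the tangency condition $\Phi(y)=O(|y|^2)$, and the spectral gap on $E_2$ is exactly the standard route used in the Ma--Wang framework, and it is correct.

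Two small remarks. First, the lemma as printed contains evident typos: $\mathcal{L}$ must be $L_\lambda|_{E_2}$ (not $|_{E_1}$), and the conclusion should read either $-\mathcal{L}\Phi(y)=\mathcal{P}_2 F(\sum y_I w_I)+\cdots$ or equivalently $\Phi(y)=(-\mathcal{L})^{-1}\mathcal{P}_2 F(\sum y_I w_I)+\cdots$. Your computation in fact lands on the former, which is the correct statement; your final sentence ``applying $\mathcal{L}^{-1}$ delivers the stated identity'' does not literally reproduce the printed formula, but that is a defect of the printed formula, not of your argument. Second, the application in the proof of Theorem~\ref{th.nbc} uses the lemma in the form $y_{Ii}=(-\sigma_{Ii}\langle w_{Ii},w_{Ii}^*\rangle)^{-1}\langle F(yw_{11}),w_{Ii}^*\rangle+o(|y|^2)$, which is precisely what your version yields after pairing with $w_{Ii}^*$.
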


\begin{proof}[Proof of Th.~\ref{th.nbc}]
We note that
	\[\left\langle {{F}_{\lambda }}(w),w_{11}^{*} \right\rangle =\left\langle {{F}_{\lambda }}(w),{{e}_{1}}\omega _{11}^{*} \right\rangle =\left\langle {{F}_{\lambda }}(w),{{e}_{1}} \right\rangle \cdot \omega _{11}^{*}.\]
We write $\langle {{F}_{\lambda }}(w),{{e}_{1}}\rangle$ just for simplicity, but we really take the ${{L}^{2}}$-inner product of each component of ${{F}_{\lambda }}(w)$ with ${{e}_{1}}$.
We have
	 \[w=y{{w}_{11}}+{{y}_{12}}{{w}_{12}}+{{y}_{13}}{{w}_{13}}+\sum\limits_{\substack{I>1\\i=1..3}}{{{y}_{Ii}}{{w}_{Ii}}}\]
There exist a finite set of indices $\I$ so that we have
\[\langle {{F}_{\lambda }}(w),{{e}_{1}}\rangle =\langle {{F}_{\lambda }}(\sum\limits_{ \substack{I\in \I\\i=1..3}}{{{y}_{Ii}}{{w}_{Ii}}}),{{e}_{1}}\rangle \]

 It is easy to see that here $\I=\{2\}$. For the sake of simplicity in notations, again let $\omega=\omega_{11}$ and $\omega^*=\omega_{11}^*$. Let us define
\[{{B}^{j}}(I):=\left\langle e_{1}^{2},{{e}_{I}} \right\rangle \sum\limits_{i=1}^{3}{b_{I,i}^{j}{{y}_{I,i}}}=\left\langle e_{1}^{2},{{e}_{I}} \right\rangle b_{I}^{j}{{y}_{I}}\quad j=1,2,\]
where
\[b_{I}^{j}=(b_{I,1}^{j},b_{I,2}^{j},b_{I,3}^{j}),\quad {{y}_{I}}={{({{y}_{I1}},{{y}_{I2}},{{y}_{I3}})}^{T}}\]
with
$b_{Ii}^{j}=({{\omega }^{j}}\omega _{Ii}^{3}+{{\omega }^{3}}\omega _{Ii}^{j}).$
Now let us also define
	\[B:=\left[ \begin{matrix}
   {{k}_{5}} {{B}^{2}}( 2 )-{{k}_{7}}{{B}^{1}}( 2)  \\
   -{{k}_{5}} {{B}^{2}}( 2 ) +{{k}_{7}}{{B}^{1}}( 2 )  \\
   -{{k}_{3}}{{B}^{1}}( 2 )  \\
\end{matrix} \right].\]
It is a straightforward calculation to see that
	\[\dot{y}={{B}^{T}}{{\omega }^{*}}y+o(|y{{|}^{3}}).\]

By Lemma~(\ref{lemma.cmf}) we can see that
\begin{equation}\label{}
    \begin{split}
     {{y}_{Ii}}&={{(-{{\sigma }_{Ii}}\left\langle {{w}_{I}},w_{I}^{*} \right\rangle )}^{-1}}\left\langle F(y{{w}_{11}}),w_{Ii}^{*} \right\rangle +o(|y{{|}^{2}}) \\
  &={{(-{{\sigma }_{Ii}}\left\langle {{e}_{I}},{{e}_{I}} \right\rangle {{\omega }_{Ii}}\cdot \omega _{Ii}^{*})}^{-1}}\left\langle e_{1}^{2},{{e}_{I}} \right\rangle F(\omega )\cdot \omega _{Ii}^{*}{{y}^{2}}+o(|y{{|}^{2}})
    \end{split}
\end{equation}

Therefore,
	\[{{y}_{I}}=\frac{-\left\langle e_{1}^{2},{{e}_{I}} \right\rangle }{\left\langle {{e}_{I}},{{e}_{I}} \right\rangle }\text{diag}{{[{{\sigma }_{Ii}}{{\omega }_{Ii}}\cdot \omega _{Ii}^{*}]}^{-1}}{{[\omega _{I1}^{*}\quad \omega _{I2}^{*}\quad \omega _{I3}^{*}]}^{T}}F(\omega ){{y}^{2}}+o(|y{{|}^{2}}).\]
where by $\text{diag}[{{\sigma }_{Ii}}{{\omega }_{Ii}}\cdot \omega _{Ii}^{*}]$ we mean a diagonal matrix with diagonal elements ${{\sigma }_{Ii}}{{\omega }_{Ii}}\cdot \omega _{Ii}^{*}$ for $i=1,2,3$. Consequently, we will have
	\[{{B}^{j}}\left( I \right)=\frac{-{{\left\langle e_{1}^{2},{{e}_{I}} \right\rangle }^{2}}}{\left\langle {{e}_{I}},{{e}_{I}} \right\rangle }b_{I}^{j}\text{diag}{{({{\sigma }_{Ii}}{{\omega }_{Ii}}\cdot \omega _{Ii}^{*})}^{-1}}{{[\omega _{I1}^{*}\quad \omega _{I2}^{*}\quad \omega _{I3}^{*}]}^{T}}F(\omega ){{y}^{2}}+o(|y{{|}^{2}}).\]

Note that
	\[\frac{{{\left\langle e_{1}^{2},{{e}_{I}} \right\rangle }^{2}}}{\left\langle {{e}_{I}},{{e}_{I}} \right\rangle }=\frac{\ell }{8}.\]
and
\[F(\omega )=\left[ \begin{matrix}
   {{k}_{5}}{{\omega }^{2}}{{\omega }^{3}}-{{k}_{7}}{{\omega }^{1}}{{\omega }^{3}}  \\
   -{{k}_{5}}{{\omega }^{2}}{{\omega }^{3}}+{{k}_{7}}{{\omega }^{1}}{{\omega }^{3}}  \\
   -{{k}_{3}}{{\omega }^{1}}{{\omega }^{3}}  \\
\end{matrix} \right].\]
\end{proof}
\begin{rem}\rm
In the proof of Th.~\ref{th.nbc}, the parameter $\bif$ is given as an algebraic expression. However, its expression is large; consequently any quantitative calculation requires a long calculation which is trivial yet tedious. We remark that  $\bif$ can be substantially simplified by assuming
\begin{equation}\label{def.C1}
  {{C}_{1}}{{k}_{7}}={{k}_{3}}( {{k}_{5}}+\rho {{d}_{2}})
\end{equation}
It can be easily verified that under the assumption \eqref{def.C1}, we will have
\begin{equation}\label{}
    \begin{split}
    {{B}^{T}}{{\omega }^{*}}&={{k}_{5}}\left( \omega _{1}^{*}-\omega _{2}^{*} \right)\left( {{B}^{2}}\left( 2 \right)+{{B}^{2}}
    \left( 0 \right) \right) \\
 & =-\frac{{{k}_{3}}{{k}_{5}}\rho}{{{k}_{7}}}\left( {{{k}_{5}}{{d}_{1}}+{k}_{7}}{{d}_{2}}+{{d}_{1}}{{d}_{2}}\rho \right){{B}^{2}}\left( 2 \right).
    \end{split}
\end{equation}
where $\rho ={{\rho }_{11}}$.
\end{rem}
	
%
%
%

\section{Conclusion}
From a biological point of view, the above results show the complex nature of the formation and phase transition of MTs in the competition of nucleation rate, extinction rate and the dynamic instability fundamental growth, shrinkage and stochastic switching process as well as  concentration dependency of the dynamic instability parameters.  These results show also the necessity for a more elaborate multistate models to include the effect of more complex boundary conditions as well as considering a two dimensional diffusion. A numerical analysis of the current study is in progress. In addition, stability analysis of the more complex boundary conditions is in progress. It is our hope that this will open the way to the study of more complicated and realistic biological models with a systematic study across physical parameters.

\section*{Acknowledgments}
The authors would like to thank Prof. Shouhong Wang (Indiana University) for helpful discussions and Henry Family Research Fund (HFRF) in Mississippi State University for partial support of this project.

\end{document}